 \def\MR#1{}
\newtheorem{theorem}{Theorem}[section]
\newtheorem{remark}{Remark}[section]
\newtheorem{lemma}[theorem]{Lemma}
\newtheorem{proposition}[theorem]{Proposition}
\numberwithin{equation}{section}
\renewcommand{\div}{{\rm div \thinspace }}
\newcommand{\bt}{\begin{theorem}}
	\newcommand{\bl}{\begin{lemma}}
		\newcommand{\el}{\end{lemma}}
	\newcommand{\et}{\end{theorem}}
\newcommand{\curl}{{\rm curl}~}
\newcommand{\bR}{\mathbb{R}}
\newcommand{\bBV}{\boldsymbol{V}}
\newcommand{\Bn}{{\boldsymbol{n}}}
\newcommand{\Bu}{{\boldsymbol{u}}}
\newcommand{\supp}{\text{supp}}
\newcommand{\Bo}{{\boldsymbol{\omega}}}
\newcommand{\Bw}{{\boldsymbol{w}}}
\begin{document}
\title
         [Saint Venant Estimates]
	{Saint-Venant Estimates and Liouville-Type Theorems for the stationary Navier-Stokes equation in $\mathbb{R}^3$}

\author[J. Bang]{Jeaheang Bang}
	\address[J. Bang]{Department of Mathematics,
Purdue University,
150 N. University Street,
West Lafayette, IN 47907, USA}
	\email{bang30@purdue.edu}

	\author[Z. Yang]{Zhuolun Yang}
	\address[Z. Yang]{
                Division of Applied Mathematics, Brown University, 182 George Street, Providence, RI 02912, USA
                }
	\email{zhuolun\_yang@brown.edu}

 \thanks{Z. Yang was partially supported by the AMS-Simons Travel Grant.}

 \subjclass[2020]{35B53, 35Q30, 76D05}

\keywords{Navier-Stokes Equation, Liouville Theorem, Saint-Venant estimates}
\begin{abstract}
    	We prove two Liouville type theorems for the stationary Navier-Stokes equations in $\mathbb{R}^3$  under some assumptions on 1) the growth of the $L^s$ mean oscillation of a potential function of the velocity field, or 2) the relative decay of the head pressure and the square of the velocity field at infinity. The main idea is to use Saint-Venant type estimates to characterize the growth of Dirichlet energy of nontrivial solutions. These assumptions are  weaker than those previously known of a similar nature. 
\end{abstract}

 \maketitle

 \section{Introduction and Main result}

Let us consider the stationary Navier-Stokes equations in $\mathbb{R}^3$:
    \begin{align} \label{SNS}
        -\Delta \Bu 
        + \Bu \cdot \nabla \Bu + \nabla p =0, 
        \quad \div \Bu=0 \quad \text{in }\mathbb{R}^3.
    \end{align}
Proving the triviality of $\Bu$ with some assumptions is called a Liouville-type problem. Following the work of Leray \cite{Leray33}, it is customary to assume
    \begin{align} 
    \label{FE}
        \nabla \Bu \in L^2 (\mathbb{R}^3), 
            \\
    \label{BC}
        \lim_{|x|\to\infty} \Bu(x) =0.
    \end{align}
Proving the triviality of $\Bu$ under these assumptions \eqref{FE} and \eqref{BC} is an outstanding open problem.  If we assume that a solution $\Bu$ converges to a nonzero constant vector instead of \eqref{BC}, we can show that $\Bu$ is the constant. See  \cite{Galdi11}*{Theorem X.7.2}.

For dimensions $n\neq 3$, the Liouville problem under the assumptions \eqref{FE} and \eqref{BC} has been successfully addressed; The two-dimensional case was solved by Gilbarg and Weinberger \cite{GilbargWeinberger78} about 45 years ago (which is also proved in \cite{KochNadirashviliSereginSverak09} later),
and the higher dimensional case was settled by Galdi \cite{Galdi11}*{Theorem X.9.5} by using standard energy estimates.  For the three-dimensional case, the same energy method works with the extra assumption $\Bu\in L^{9/2}(\mathbb{R}^3)$.  However, there is a substantial disparity between $L^{9/2}$ and the natural energy space $L^6$.
In terms of narrowing the gap between these two spaces, a logarithmic achievement has been made by Chae and Wolf in \cite{ChaeWolf16}: 
if a solution $\Bu$ to \eqref{SNS} satisfies
    \begin{align*}
       \int_{\mathbb{R}^3}
       \frac{|\Bu|^{\frac{9}{2}}}{  
\log\left(2+\frac{1}{|\Bu|}\right) }\thinspace dx<\infty,
    \end{align*}
then $\Bu \equiv 0.$

There are numerous partial results with various additional assumptions.  
In \cite{KozonoTerasawaWakasugi17}, Kozono, Terasawa, and Wakasugi proved the triviality of a solution $\Bu$ to \eqref{SNS}-\eqref{BC} assuming the vorticity $\boldsymbol{\omega}$ satisfies either
$\boldsymbol{\omega}=o(|x|^{-5/3})$ as $|x|\to\infty$ or $\|\boldsymbol{\omega}\|_{L^{9/5,\infty}}\leq \varepsilon \|\nabla \Bu\|_{L^2(\mathbb{R}^3)}^{2/3}$ for some small $\varepsilon>0.$ Similar results for axisymmetric solutions were shown in \cites{Wang19,Zhao19}, where the authors showed either $\Bu = O(r^{-(2/3)^+})$ or $\boldsymbol{\omega}=O(r^{-(5/3)^+})$ implies $\Bu \equiv 0$, with $r = \sqrt{x_1^2 + x_2^2}$. Notably, the 3D Liouville problem is still open even for the axisymmetric case. If one further assumes that a solution does not have a swirl component, it was first proved in \cite{KochNadirashviliSereginSverak09}, and then by a different method in \cite{KorobkovPileckasRusso15}.

Seregin \cite{Seregin16}  established a Caccioppoli-type inequality to prove the triviality of a solution $\Bu$ to \eqref{SNS} with
\begin{align*}
    \Bu \in L^6 (\mathbb{R}^3) \cap BMO^{-1} (\mathbb{R}^3),
\end{align*}
where $\Bu\in BMO^{-1}(\mathbb{R}^3)$ means $\Bu=\div \bBV$ for some anti-symmetric tensor $\bBV\in BMO(\mathbb{R}^3)$. The $BMO^{-1}$ condition was later relaxed to growth conditions on a mean oscillation of $\bBV$ in \cites{Seregin18, ChaeWolf19}. The first result of this paper further relaxes this growth condition by characterizing the growth of Dirichlet energy.

Chae in \cites{Chae14, Chae20} proved the triviality of a solution $\Bu$ to \eqref{SNS} with \eqref{BC} provided one of the following conditions hold:
\begin{align*}
\Delta \Bu \in L^{6/5}(\mathbb{R}^3)
\quad 
\text{or}
\quad
    \nabla \sqrt{|Q|}\in L^2 (\mathbb{R}^3).
\end{align*}
Here, $Q$ is the Bernoulli head pressure function, defined by
\begin{equation}\label{Def:Q}
    Q:= \frac{|\Bu|^2}{2}+ p.
\end{equation}
This quantity played an important role in numerous results on the stationary Navier-Stokes equations, such as \cites{FrehseRuzicka98, KorobkovPileckasRusso15c, LiYang22,BangGuiLiuWangXie23}.
These conditions have the same scaling as \eqref{FE}.

Thanks to \eqref{FE}, one can obtain that $p(x) \to p_0$ as $|x|\to\infty$ for some constant $p_0$ (cf. \cite{Galdi11}*{Theorem X.5.1}). As the pressure $p$ is defined up to a constant, we redefine it so that 
$p (x) \to 0$   as  $|x|\to \infty$ when assuming \eqref{FE}.
Then due to \eqref{BC},
\begin{align} \label{Qinfty}
    Q(x) \to 0 \quad \text{as }|x|\to\infty.
\end{align}
Chae in \cite{Chae21} proved the triviality of $\Bu$ with the additional assumption: 
    \begin{align}
    \label{cond213}
        \sup_{\mathbb{R}^3 }
        \frac{|\Bu|^2}{|Q|}<\infty. 
    \end{align}
This condition implies that if there exists a nontrivial solution $\Bu$ to \eqref{SNS}-\eqref{BC}, then the head pressure $Q$ should decay faster than $|\Bu|^2$. In other words, there should be some cancellation between $|\Bu|^2/2$ and $p$ near the infinity. The second result of this paper relaxes the assumption \eqref{cond213}, indicating this cancellation should be stronger.

In addition, in \cite{Tsai21}, the author established Liouville-type theorems in various domains including $\mathbb{R}^3$ by imposing an assumption on the $L^q$ norm of a solution in an annulus. The key was to find the radial dependence of the constant involved in the estimate of the Bogovskii map in the annulus.

For more results on Liouville-type theorems for the stationary Navier-Stokes equation, we refer the reader to \cites{PlechacSverak03,ChaeYoneda13, ChaeWeng16, SereginWang19, CarrilloPanZhang20, CarrilloPanZhangZhao20, Chae23, ChoChoiYang23, KozonoTerasawaWakasugi23}, and the references therein. History of this topic within a broader context is also available in \cite{Galdi11}*{Section I.2.1}.

Before stating our main results, let us introduce some notations.  Throughout this paper, we use the notation $\|f\|_{p}$ to denote the usual $L^p$ norm of $f$ in the whole space $\bR^3$, $\|f\|_{p,R}$ to denote the $L^p$ norm of $f$ in $B_R$, and $(f)_{B_R}$ to denote the average of $f$ in $B_R$.

The first result of this paper further relaxes the assumption of \cite{ChaeWolf19}*{Theorem 1.1}. The precise statement is as follows.

\begin{theorem}\label{Thm_1}
   Let $\Bu$ be a smooth solution of \eqref{SNS}. Suppose that there exist an $s \in (3,6]$ and a smooth anti-symmetric potential $\bBV \in C^\infty(\bR^3; \bR^{3\times 3})$ such that $\nabla \cdot \bBV = \Bu$, and 
    \begin{equation}\label{V_condition}
        \| \bBV - (\bBV)_{B_R}\|_{s,R} \lesssim R^{\frac{s+6}{3s}} (\log R)^{\frac{s-3}{3s}} \quad \mbox{for}~R > 2.
    \end{equation}
    Then $\Bu \equiv 0$.
\end{theorem}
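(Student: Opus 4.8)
The plan is to establish a Saint-Venant–type (energy-growth) estimate for the Dirichlet integral $D(R) := \|\nabla \Bu\|_{2,R}^2$ and then show that the hypothesis \eqref{V_condition} forces $D(R)$ to grow so slowly that it must vanish. First I would fix a smooth radial cutoff $\eta_R$ with $\eta_R \equiv 1$ on $B_R$, $\supp \eta_R \subset B_{2R}$, and $|\nabla \eta_R| \lesssim 1/R$, and test the equation \eqref{SNS} against $\eta_R^2 \Bu$. After integrating by parts, the viscous term produces $\int \eta_R^2 |\nabla \Bu|^2$, while the pressure term and the convective term are rewritten using $\div \Bu = 0$ in the standard divergence form
\begin{equation*}
    \int_{\bR^3} \eta_R^2 |\nabla \Bu|^2 \, dx
    = -\int_{\bR^3} \left( \frac{|\Bu|^2}{2} + p \right) \Bu \cdot \nabla (\eta_R^2) \, dx
    - \int_{\bR^3} \nabla \Bu : (\Bu \otimes \nabla(\eta_R^2)) \, dx .
\end{equation*}
The key point is to avoid estimating $\Bu$ or $p$ directly and instead route everything through the potential $\bBV$: writing $\Bu = \nabla \cdot \bBV = \nabla \cdot (\bBV - (\bBV)_{B_{2R}})$ (legitimate since $\bBV$ is defined up to a constant tensor and the antisymmetry is preserved), one integrates by parts once more to move a derivative off the bad factor $(|\Bu|^2/2 + p)\Bu$ and onto $\bBV - (\bBV)_{B_{2R}}$ times $\nabla(\eta_R^2)$, at the cost of a term involving $\nabla[(|\Bu|^2/2+p)\Bu]$; using the equation this last quantity is controlled by $|\nabla \Bu|(|\Bu|^2 + |p|) + \ldots$, and one closes using the local pressure estimate (Bogovskii/Calderón–Zygmund on the annulus) $\|p - (p)_{B_{2R}}\|_{L^3(B_{2R})} \lesssim \|\Bu\|_{L^6(B_{2R})}^2 \lesssim D(2R)$, Sobolev's inequality $\|\Bu\|_{L^6(\bR^3)} \lesssim \|\nabla \Bu\|_2$, and Hölder with the exponent $s$ from \eqref{V_condition}.

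The outcome of this bookkeeping should be a differential inequality of Saint-Venant type, roughly
\begin{equation*}
    D(R) \lesssim \frac{1}{R} \, \| \bBV - (\bBV)_{B_{2R}} \|_{s, 2R} \, \big( D(2R) \big)^{\theta} + (\text{lower order}),
\end{equation*}
for a suitable exponent $\theta \in (0,1)$ determined by interpolating the factors $\Bu, |\Bu|^2, \nabla\Bu$ among $L^2$, $L^6$ (with the Sobolev bound) and $L^{s'}$; the scaling $\frac{s+6}{3s}$ in the exponent of $R$ in \eqref{V_condition} is exactly what makes this term dimensionally consistent with $D(R)$, and the $(\log R)^{\frac{s-3}{3s}}$ factor then appears as a logarithmic correction. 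Combined with the basic fact (from \eqref{FE}, which one first shows is forced, or which one may assume) that $D(R)$ is finite and nondecreasing, this inequality yields that either $D$ is identically zero or $D(R)$ grows at least like some positive power of $R / (\log R)^{\kappa}$ unless the constant is degenerate — and the precise form of \eqref{V_condition} is calibrated so that the only consistent possibility is $D \equiv 0$, whence $\nabla \Bu \equiv 0$ and, together with boundedness, $\Bu \equiv 0$.

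The main obstacle I anticipate is the pressure term: $p$ has no sign and is only locally controlled by $\Bu$ up to an additive constant, so one must be careful to subtract the average $(p)_{B_{2R}}$ and pair it against a quantity that integrates to zero (this works because $\Bu \cdot \nabla(\eta_R^2)$ and $\nabla \cdot (\bBV - (\bBV)_{B_{2R}})$ both have the right structure). A secondary technical point is the iteration/absorption step: one needs the exponent $\theta$ to be strictly less than $1$ so that a Gronwall- or Gagliardo-type iteration on dyadic radii can be run; if $\theta$ were $1$ one would only get exponential-type control, which is too weak. Getting $\theta<1$ requires using Sobolev's inequality on the whole space (not just locally) to trade one power of $\Bu$ for the global $\|\nabla\Bu\|_2$ — this is where assumption \eqref{FE} (or its a priori derivation) is essential. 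Finally, the sharp shape of the admissible growth rate, including the exact power $\frac{s-3}{3s}$ on the logarithm, will come from optimizing the dyadic summation, and tracking that constant carefully is the delicate part of the argument.
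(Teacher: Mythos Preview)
Your outline has the right flavor (cutoff energy identity, rewrite $\Bu=\div\overline\bBV$, Saint-Venant iteration), but there are two genuine gaps that would prevent the argument from closing as written.

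\textbf{First, you cannot invoke \eqref{FE}.} Theorem~\ref{Thm_1} does \emph{not} assume finite Dirichlet energy, nor the decay \eqref{BC}; the only hypothesis is the growth bound \eqref{V_condition} on $\overline\bBV$. Yet you write that getting $\theta<1$ ``requires using Sobolev's inequality on the whole space \dots\ this is where assumption \eqref{FE} \dots\ is essential.'' That step is unavailable. The paper never obtains $\nabla\Bu\in L^2(\bR^3)$ a priori; instead it proves only the local bound $\|\nabla\Bu\|_{2,R}^2\lesssim\log R$ (Lemma~\ref{Lemma_energy_upperbound}), and this is precisely what feeds back into the Saint-Venant ODE at the end. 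Any global Sobolev step $\|\Bu\|_{L^6(\bR^3)}\lesssim\|\nabla\Bu\|_{2}$ must be replaced by purely local estimates of $\|\Bu\eta\|_{q,R}$ in terms of $\|\overline\bBV\|_{s,R}$ and $\|\nabla\Bu\|_{2,R}$ (Lemmas~\ref{Lemma_u_L2}--\ref{Lemma_u_Ls} in the paper). Likewise, at the very end, ``together with boundedness, $\Bu\equiv0$'' is not justified: once $\nabla\Bu\equiv0$, the constant is shown to vanish using \eqref{u_L2_estimate} and \eqref{V_condition}, not \eqref{BC}.

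\textbf{Second, the pressure handling and the shape of the Saint-Venant inequality.} Estimating $\|p-(p)_{B_{2R}}\|_{L^3}\lesssim\|\Bu\|_{L^6}^2$ and then $\|\Bu\|_{L^6}^2\lesssim D(2R)$ introduces an extra full power of $D$ on the right and makes the bookkeeping much worse (and, as above, the second inequality is not available locally without controlling $(\Bu)_{B_{2R}}$). The paper avoids pressure estimates entirely by testing with $\Bu\eta_2-\Bw$, where $\Bw$ is the Bogovskii solution of $\div\Bw=\Bu\cdot\nabla\eta_2$ on the annulus, with the scale-explicit bound \eqref{bogo_estimate}; since this test function is divergence-free, the pressure term drops out. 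The anti-symmetry of $\bBV$ is then used in a sharp way: after one integration by parts in the convective term, the second-derivative and the symmetric-in-$i,k$ pieces vanish, leaving only an annular integral. The resulting estimate is \emph{not} of the form $D(R)\lesssim C(R)\,D(2R)^\theta$ but rather
\[
g(R)\ \lesssim\ \Big(R\,g'(R)\Big)^{1/2}\,(\log R)^{\frac{s-3}{s+6}}\Big(g(3R)^{\frac{12-s}{2(s+6)}}+1\Big),
\]
where $g'(R)$ comes from the annular integral. One first iterates (Lemma~\ref{Lemma_energy_upperbound}) to get $g(3R)\lesssim\log R$, and only then the inequality becomes $g(R)^2\lesssim R\log R\,g'(R)$, whose non-integrability at infinity forces $g\equiv0$. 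Your dyadic inequality with $\theta<1$ would at best yield boundedness of $D$, not its vanishing; the annulus-on-the-right structure is what produces the Liouville conclusion.
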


\begin{remark} 
Theorem \ref{Thm_1} is stated for $s\in (3,6]$, but 
it can generalized to $s\in(3,\infty)$ by replacing \eqref{V_condition} with
    \begin{equation*}
        \| \bBV - (\bBV)_{B_R}\|_{s,R} \lesssim R^{\min\{\frac{s+6}{3s},\frac{s+18}{6s}\}} (\log R)^{\min\{\frac{s-3}{3s},\frac16\}} \quad \mbox{for}~R > 2.
    \end{equation*}
Indeed, if $s>6$, by H\"older's inequality,
    \begin{align*}
    \|\bBV - (\bBV)_{B_R} \|_{6,R}
    \lesssim
    \|\bBV - (\bBV)_{B_R}\|_{s,R} \,
    R^{ \frac{s-6}{2s} }
    \lesssim
    R^{\frac23} (\log R)^{\frac16},
    \end{align*}
which is \eqref{V_condition} with $s=6$.  For $s = 3$, we do not have a control of the local Dirichlet energy (see Lemma \ref{Lemma_energy_upperbound} below). In this case, by additionally assuming a growth condition on $\|\nabla\Bu\|_{2,R}$, one can still follow the proof to relax the main condition of \cite{Seregin16}*{Theorem 1.1}.
\end{remark}

\begin{remark}
    Compared to the results in \cite{ChaeWolf19}, we further assume the anti-symmetry of $\bBV$ but relax the growth condition on the oscillation of $\bBV$ by a logarithmic factor. We would like to point out that for any divergence free vector field $\Bu$, there always exists an anti-symmetric potential $\bBV$ such that $\Bu = \div \bBV$. Regularity assumptions on such anti-symmetric potentials are commonly made in the literature on fluid equations. See, e.g., \cites{SSSZ,Seregin16}. 
\end{remark} 

Our second result of this paper improves \cite{Chae21}*{Theorem 1.1}. Recall that under the conditions \eqref{FE} and \eqref{BC}, we may assume without loss of generality that the head pressure $Q$ satisfies \eqref{Qinfty}.

\begin{theorem}\label{Thm_2}
Let $\Bu$ be a smooth solution to \eqref{SNS}-\eqref{BC}. 
Assume that 
\begin{equation}\label{cond2}
    \limsup_{|x| \to \infty}
	\frac{|\Bu |^2}
 {|Q||\log |Q||} <\infty,
\end{equation}
where $Q$ is defined in \eqref{Def:Q} and satisfies \eqref{Qinfty}. Then $\Bu \equiv 0$.
\end{theorem}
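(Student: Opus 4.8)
The plan is to reduce Theorem \ref{Thm_2} to Theorem \ref{Thm_1} (or rather to its underlying Saint-Venant machinery) by constructing, from the head-pressure hypothesis \eqref{cond2}, an anti-symmetric potential $\bBV$ with $\div\bBV = \Bu$ satisfying the oscillation bound \eqref{V_condition} for the endpoint exponent $s=6$, namely $\|\bBV - (\bBV)_{B_R}\|_{6,R} \lesssim R^{2/3}(\log R)^{1/6}$. First I would fix a canonical choice of anti-symmetric potential: since $\div\Bu=0$ on $\bR^3$, one may take $\bBV_{ij} = \pa_i (\Delta^{-1} u_j) - \pa_j(\Delta^{-1} u_i)$, or more convenient for local oscillation estimates, solve $-\Delta \BPsi = \curl\Bu = \Bo$ with $\div\BPsi = 0$ and set $u = \curl\BPsi$, then repackage $\BPsi$ as the anti-symmetric tensor $\bBV$. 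The oscillation of $\bBV$ on $B_R$ is then controlled by $\|\nabla\bBV\|_{L^2(B_{2R})}$ modulo lower-order terms via Poincaré–Sobolev (Gagliardo–Nirenberg gives the $L^6$–$L^2$-gradient link in $\bR^3$), and $\nabla\bBV$ is comparable to $\Bu$ together with $\Bo$; ultimately the bound \eqref{V_condition} with $s=6$ follows once we show the local Dirichlet energy grows no faster than $R^{?}$ — in fact the precise exponent needed is $\|\nabla\Bu\|_{2,R}\lesssim$ a controlled power of $R$ times $\log$, which is exactly the kind of growth estimate produced by Lemma \ref{Lemma_energy_upperbound}.

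The real content is extracting this energy growth from \eqref{cond2}. Here I would run the standard Caccioppoli argument on \eqref{SNS}: testing the momentum equation against $\varphi^2 \Bu$ with a cutoff $\varphi$ supported in $B_{2R}$ and using $\div\Bu=0$ produces
\begin{equation*}
    \int \varphi^2 |\nabla\Bu|^2 = -\int \Bu\cdot\nabla\varphi^2 \, Q - 2\int (\nabla\varphi\cdot\nabla\Bu)\cdot(\varphi\Bu),
\end{equation*}
because $\Bu\cdot\nabla\Bu\cdot\Bu + \nabla p\cdot\Bu = \Bu\cdot\nabla(|\Bu|^2/2 + p) = \Bu\cdot\nabla Q$. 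The key term is $\int \Bu\cdot\nabla\varphi^2\, Q$; choosing $\varphi$ to be the usual logarithmic cutoff (equal to $1$ on $B_R$, supported in $B_{R^2}$ or in a dyadic annulus with $|\nabla\varphi|\lesssim 1/(|x|\log|x|)$) and invoking \eqref{cond2}, which says $|Q| \gtrsim |\Bu|^2/|\log|Q||$ hence $|\Bu|^2 \lesssim |Q|\,|\log|Q||$ near infinity, one bounds this term by $\int_{A_R} |\Bu|\,|Q|\,|\nabla\varphi| \lesssim \big(\int_{A_R}|\Bu|^2\big)^{1/2}\big(\int_{A_R}|Q|^2 |\nabla\varphi|^2\big)^{1/2}$ or, more sharply, absorbs a copy of $|\Bu|^2$ and uses that on the support of $\nabla\varphi$ one has $|Q||\log|Q|| \lesssim |Q|$ up to the log. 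Combined with \eqref{Qinfty} (so $|\log|Q||\to\infty$ and one may assume $|Q|\le 1$ far out), this yields an inequality of the form $E(R) \lesssim \text{(lower order)} + o(1)\,E(R^2)$ for the Dirichlet energy $E(R) = \int_{B_R}|\nabla\Bu|^2$, which iterates to the sublinear/logarithmic growth bound on $E(R)$ needed above.

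I expect the main obstacle to be making the bookkeeping in the Caccioppoli step genuinely exploit the extra $|\log|Q||$ factor rather than merely \eqref{cond213}: one must pair the logarithmic cutoff $|\nabla\varphi|\sim (|x|\log|x|)^{-1}$ with \eqref{cond2} so that the gain from the log in $\nabla\varphi$ and the gain from $|\log|Q||$ in the hypothesis stack correctly, while controlling $\|Q\|$ on the annulus (for which one uses that $Q$ is harmonic-like away from $\Bu$ — more precisely $\Delta Q = |\curl\Bu|^2 + \div(\Bu\, ?) \geq 0$ up to the convective term, giving $Q\le 0$ or a maximum-principle/mean-value control via the known identity $-\Delta Q = \Bo\cdot\Bo^{\perp}?$, i.e. $\Delta Q = \Bu\cdot\nabla Q + |\Bo|^2$ when $\div\Bu=0$). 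Once the growth rate of $E(R)$ is pinned down to match the threshold in Lemma \ref{Lemma_energy_upperbound} and the potential $\bBV$ is verified to satisfy \eqref{V_condition} with $s=6$, Theorem \ref{Thm_1} closes the argument and forces $\Bu\equiv 0$. An alternative, more self-contained route is to skip the reduction to Theorem \ref{Thm_1} and directly feed the energy inequality into the same Saint-Venant iteration: if $E(R)$ is shown to be bounded (or to grow slower than any positive power), letting $R\to\infty$ gives $\nabla\Bu\in L^2(\bR^3)$, and then \eqref{BC} plus the classical argument (or Chae's $\nabla\sqrt{|Q|}\in L^2$ criterion, which \eqref{cond2} is designed to imply on the support of $\nabla Q$) yields triviality.
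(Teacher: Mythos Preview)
Your proposal has a genuine gap: the hypothesis \eqref{cond2} is a \emph{pointwise relative} bound between $|\Bu|^2$ and $|Q|$, and neither of your two routes turns it into anything usable.

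For the reduction to Theorem~\ref{Thm_1}: under the standing assumptions \eqref{FE}--\eqref{BC} you already have $\nabla\Bu\in L^2(\bR^3)$, so your plan to ``show $E(R)$ grows at most logarithmically'' is vacuous---$E(R)$ is bounded. What you would actually need is to verify \eqref{V_condition} with $s=6$, i.e.\ $\|\bBV-(\bBV)_{B_R}\|_{6,R}\lesssim R^{2/3}(\log R)^{1/6}$. But from $\Bu\in L^6(\bR^3)$ (Sobolev) and Poincar\'e one only gets $\|\bBV-(\bBV)_{B_R}\|_{6,R}\lesssim R$, and \eqref{cond2} provides no mechanism to improve this: it says nothing about the absolute size or integrability of $\Bu$ beyond what \eqref{FE} already gives. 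If the reduction you sketch worked, it would work without \eqref{cond2} at all, solving the full Liouville problem.

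For the direct Caccioppoli route with a spatial cutoff $\varphi$: the dangerous term is $\int Q\,(\Bu\cdot\nabla\varphi^2)$, and on the annulus $A_R$ you know only that $|\Bu|^2\lesssim|Q|\,|\log|Q||$. This gives $|\Bu|\,|Q|\lesssim |Q|^{3/2}|\log|Q||^{1/2}$, but you have no control on $\int_{A_R}|Q|^{3/2}$ that is any better than what \eqref{FE} already provides. The ``logarithmic cutoff'' does not help: the gain from $|\nabla\varphi|\sim(|x|\log|x|)^{-1}$ is spatial, whereas the $|\log|Q||$ in \eqref{cond2} lives in the range of $Q$, and there is no a priori relation between $|x|$ and $|Q(x)|$ to link them. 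Your aside that \eqref{cond2} ``is designed to imply'' $\nabla\sqrt{|Q|}\in L^2$ is unsupported.

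The missing idea, which the paper uses, is to abandon spatial cutoffs and instead cut off on \emph{level sets of $Q$}. Since $Q<0$ (strong maximum principle applied to $-\Delta Q+\Bu\cdot\nabla Q=-|\Bo|^2$), one tests against $\eta(-Q/\lambda)$ with $\eta(s)=0$ for $s<1/2$, $\eta(s)=1$ for $s\ge1$; on the transition region $\{-\lambda<Q<-\lambda/2\}$ the hypothesis \eqref{cond2} becomes the \emph{absolute} bound $|\Bu|^2\lesssim\lambda|\log\lambda|$. The convective term $\int(\Bu\cdot\nabla Q)\,\eta(-Q/\lambda)$ vanishes exactly because the cutoff is a function of $Q$, and the head-pressure equation is used a second time to control the $|\nabla Q|^2$ factor that appears. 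One arrives at $g(\lambda)\lesssim -\lambda|\log\lambda|\,g'(\lambda)$ for $g(\lambda)=\int|\nabla\Bu|^2\eta(-Q/\lambda)$, and the divergence of $\int_0 d\lambda/(\lambda|\log\lambda|)$ near $0$ gives the contradiction. The Saint-Venant argument runs in the parameter $\lambda$, not in $R$.
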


\begin{remark}
    The $(\log R)$ factor in \eqref{V_condition} and $|\log |Q||$ in \eqref{cond2} can be further relaxed to $a(R)$ and $a(|Q|)$, respectively, where $$
    a(s) = \prod_{k=1}^m l^k(s)
    $$
    for any $m \in \mathbb{N}$, $l^k$ is the composition of $|\log|$ function k times. For example, when $m=2$, $a(s)=\big|\log |\log s|\big| \times |\log s|$. We rely on the facts that $1/(sa(s))$ is not integrable at $s = \infty$ in the proof of Theorem \ref{Thm_1}, and it is not integrable at $s=0$ in the proof of Theorem \ref{Thm_2}.
\end{remark}

A key ingredient of our proof is a type of estimates that was used in \cite{Toupin65} to first rigorously established Saint-Venant's principle for the linear systems of elasticity. Such estimates were later referred to as Saint-Venant type estimates (SV estimates) by Ladyzhenskaya and Solonnikov in \cite{LadyzhenskayaSolonnikov80}. To elaborate on the idea, one first uses a specialized cut-off function to obtain an energy estimate, which leads to a (ordinary) differential inequality of the energy over a growing bounded region. By studying the differential inequality, one can characterize the behavior of the energy of a nontrivial solution.

In the studies of the stationary Navier-Stokes equations, SV estimates were applied to establish Liouville-type theorems in a cylinder-like domain in \cite{LadyzhenskayaSolonnikov80}, and were later extended to the slab domain $\bR^2 \times (0,1)$ in \cite{BangGuiWangXie22}. See also \cite{KozonoTerasawaWakasugi23b}. We emphasize that it is crucial for the domain to be bounded in at least one direction for the use of the Poincar\'e inequality to obtain the desired differential inequality in the literature above.

Let us provide a brief overview of our proof strategies. To prove Theorem \ref{Thm_1} by using SV estimates, we use the assumption $\Bu=\div \bBV$ to introduce extra derivatives to control the nonlinear term; and a noteworthy element is the use of the Bogovskii map on an annular region to handle the pressure term, with additional attention on the constant dependence on the radii. For Theorem \ref{Thm_2}, we define a special cut-off function through level sets of $Q$ to handle both the nonlinear and pressure terms. Then,  the (elliptic) equation of the head pressure, \eqref{hpeq}, is used to control additional factors due to the special cut-off function. In the end, we derive characterizations of the local Dirichlet energy of nontrivial solutions through SV estimates, leading to contradictions.

The rest of paper is organized as follows. In Section \ref{1thm}, we provide a proof of Theorem \ref{Thm_1}, and Section \ref{2thm} is devoted to a proof of Theorem \ref{Thm_2}.

\section{Proof of Theorem \ref{Thm_1}} \label{1thm}

Throughout this section,  we denote $\overline \bBV: = \bBV - (\bBV)_{B_R}$ for convenience. We will prove Theorem \ref{Thm_1} through the following lemmas. The first two lemmas follow essentially from \cite{ChaeWolf19}*{Lemma 2.1 and Lemma 2.2}.

\begin{lemma}\label{Lemma_u_L2}
    Let $0 < \rho < R$, $s \ge 2$, and $\eta_1$ be a cut-off function such that $\eta_1 = 1$ in $B_{(R+\rho)/2}$, $\eta_1 = 0$ in $B_R^\complement$,  $|\nabla \eta_1| \lesssim (R-\rho)^{-1}$. Then
    \begin{equation}
        \label{u_L2_estimate}
        \|\Bu \eta_1\|_{2,R} \lesssim R^{\frac{3(s-2)}{4s}}\|\overline \bBV\|_{s,R}^{\frac{1}{2}} \|\nabla \Bu\|_{2,R}^{\frac{1}{2}} +  R^{\frac{3(s-2)}{2s}} (R-\rho)^{-1} \|\overline \bBV\|_{s,R}.
    \end{equation}
\end{lemma}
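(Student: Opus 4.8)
The plan is to test the identity $\Bu = \div\overline\bBV$ (valid because $(\bBV)_{B_R}$ is a constant matrix, so $\div\bBV=\div\overline\bBV$) against $\Bu\,\eta_1^2$. Writing $\bBV=(V_{ij})$ and integrating by parts, with no boundary contribution since $\eta_1$ is compactly supported in $B_R$, one obtains
\begin{align*}
\|\Bu\eta_1\|_{2,R}^2 &= \int_{B_R}(\div\overline\bBV)\cdot\Bu\,\eta_1^2\,dx \\
&= -\int_{B_R}\overline{V}_{ij}\,(\partial_j u_i)\,\eta_1^2\,dx \; - \; 2\int_{B_R}\overline{V}_{ij}\,u_i\,\eta_1\,\partial_j\eta_1\,dx .
\end{align*}
It then remains to estimate the two terms on the right-hand side.

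For the first term I would apply H\"older's inequality with exponents $s$ and $s'=s/(s-1)$, and then—using that $s\ge 2$ forces $s'\le 2$—the inclusion $L^2(B_R)\subset L^{s'}(B_R)$, which costs $|B_R|^{1/s'-1/2}\sim R^{3(s-2)/(2s)}$; this bounds it (up to a constant) by $R^{3(s-2)/(2s)}\|\overline\bBV\|_{s,R}\|\nabla\Bu\|_{2,R}$. For the second term I would insert $|\nabla\eta_1|\lesssim(R-\rho)^{-1}$ and apply H\"older with the three exponents $s$, $2$ and $\big(\tfrac12-\tfrac1s\big)^{-1}$ (the last being the exponent applied to the constant function $1$), which gives the bound $(R-\rho)^{-1}R^{3(s-2)/(2s)}\|\overline\bBV\|_{s,R}\|\Bu\eta_1\|_{2,R}$; conveniently, the same power of $R$ shows up in both terms.

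Writing $X:=\|\Bu\eta_1\|_{2,R}$ (which is finite since $\Bu$ is smooth and $B_R$ is bounded), the two estimates combine into an inequality of the form $X^2\lesssim A+BX$ with $A=R^{3(s-2)/(2s)}\|\overline\bBV\|_{s,R}\|\nabla\Bu\|_{2,R}$ and $B=(R-\rho)^{-1}R^{3(s-2)/(2s)}\|\overline\bBV\|_{s,R}$. I would conclude by absorbing $BX$ via Young's inequality, $BX\le\tfrac12X^2+\tfrac12B^2$ after adjusting the implicit constant, which yields $X^2\lesssim A+B^2$ and hence $X\lesssim A^{1/2}+B$; since $A^{1/2}=R^{3(s-2)/(4s)}\|\overline\bBV\|_{s,R}^{1/2}\|\nabla\Bu\|_{2,R}^{1/2}$, this is exactly \eqref{u_L2_estimate}.

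I do not expect a genuine obstacle here: the argument is a routine testing-and-absorption scheme. The only points requiring care are the bookkeeping of the powers of $R$ in the H\"older and embedding steps (all of which collapse to the single exponent $3(s-2)/(2s)$), and carrying out the absorption so that $\|\nabla\Bu\|_{2,R}$ ultimately appears to the power $\tfrac12$ rather than $1$. Note also that the anti-symmetry of $\bBV$ is not used in this particular lemma; only $\Bu=\div\overline\bBV$ is needed.
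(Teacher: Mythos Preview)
Your proposal is correct and essentially identical to the paper's proof: both test $\Bu=\div\overline\bBV$ against $\Bu\eta_1^2$, integrate by parts, bound the two resulting terms by H\"older, and absorb the cross term via Young's inequality. The only cosmetic difference is that the paper first pairs $\overline\bBV$ with $\nabla\Bu$ in $L^2$--$L^2$ and then upgrades $\|\overline\bBV\|_{2,R}$ to $\|\overline\bBV\|_{s,R}$ via H\"older, whereas you go directly to $L^s$; the powers of $R$ match, and, as you note, anti-symmetry of $\bBV$ is not needed for this lemma.
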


\begin{proof}
    Using $\Bu = \nabla \cdot \overline\bBV$, integration by parts and H\"older's inequality, we have
    \begin{align*}
         \|\Bu \eta_1\|_{2,R}^2 = \int_{B_R} |\Bu|^2 \eta_1^2 &= \int_{B_R} \partial_i \overline V_{ij} u_j \eta_1^2\\
         &= - \int_{B_R} \overline V_{ij} \partial_i u_j \eta_1^2 - 2 \int_{B_R} \overline V_{ij} u_j \partial_i \eta_1 \eta_1\\
         &\lesssim \| \overline \bBV \|_{2,R} \| \nabla \Bu \|_{2,R} + (R-\rho)^{-1} \| \overline \bBV \|_{2,R}  \|\Bu \eta_1\|_{2,R}. 
    \end{align*}
    Therefore, by Young's inequality and H\"older's inequality, we have
    \begin{align*}
        \|\Bu \eta_1\|_{2,R}^2  &\lesssim \| \overline \bBV \|_{2,R} \| \nabla \Bu \|_{2,R} + (R-\rho)^{-2} \| \overline \bBV \|_{2,R}^2\\
        &\lesssim R^{\frac{3(s-2)}{2s}} \| \overline \bBV \|_{s,R} \| \nabla \Bu \|_{2,R} 
        + R^{\frac{3(s-2)}{s}} (R-\rho)^{-2} \| \overline \bBV \|_{s,R}^2.
    \end{align*}
    The lemma is proved.
\end{proof}

\begin{lemma}\label{Lemma_u_Ls}
    Let $s \ge 2$, and $\eta_1$ be the cut-off function as in Lemma \ref{Lemma_u_L2}. Then
    \begin{equation}
        \label{u_Ls_estimate}
        \|\Bu \eta_1\|_{\frac{4s}{s+2},R}
        \lesssim
        \|\overline{\bBV}\|_{s,R}^{\frac{1}{2}}
        \|\nabla \Bu\|_{2,R}^{\frac12}
        +
        (R-\rho)^{-1}
        R^{\frac{3s-6}{4s}}
        \|\overline{\bBV}\|_{s,R}.
    \end{equation}
\end{lemma}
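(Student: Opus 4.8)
The plan is to interpolate the $L^{4s/(s+2)}$ norm between $L^2$ and $L^6$ and then feed in the $L^2$ bound from Lemma \ref{Lemma_u_L2} together with a Sobolev-type $L^6$ bound. First I would write the target exponent as an interpolation of $2$ and $6$: a short computation gives that $L^{4s/(s+2)}$ sits between $L^2$ and $L^6$ with weights depending on $s$ (for $s\in[2,\infty)$ the exponent $4s/(s+2)\in[2,4)$, so both endpoints are available). Concretely, $\|\Bu\eta_1\|_{4s/(s+2),R}\le \|\Bu\eta_1\|_{2,R}^{\theta}\|\Bu\eta_1\|_{6,R}^{1-\theta}$ for the appropriate $\theta=\theta(s)$.

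Next I would estimate $\|\Bu\eta_1\|_{6,R}$. By the Sobolev inequality in $\bR^3$, $\|\Bu\eta_1\|_{6,R}\lesssim \|\nabla(\Bu\eta_1)\|_{2,R}\lesssim \|\nabla\Bu\|_{2,R} + \|\Bu\nabla\eta_1\|_{2,R}$, and since $|\nabla\eta_1|\lesssim (R-\rho)^{-1}$ is supported where $\eta_1\lesssim 1$, the last term is $\lesssim (R-\rho)^{-1}\|\Bu\eta_1'\|_{2,R}$ for a slightly larger cutoff $\eta_1'$; invoking Lemma \ref{Lemma_u_L2} (applied with a comparable pair of radii) controls this by the right-hand side of \eqref{u_L2_estimate}. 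So both $\|\Bu\eta_1\|_{2,R}$ and $\|\Bu\eta_1\|_{6,R}$ are bounded by expressions of the form $R^{a}\|\overline\bBV\|_{s,R}^{1/2}\|\nabla\Bu\|_{2,R}^{1/2} + R^{b}(R-\rho)^{-1}\|\overline\bBV\|_{s,R}$.

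Finally I would substitute these two bounds into the interpolation inequality and expand. Each of the two resulting products is a sum of two terms raised to powers $\theta$ and $1-\theta$; using $(x+y)^{\theta}\le x^{\theta}+y^{\theta}$ (or simply $(x+y)\lesssim \max$) one gets a sum of (at most four) monomials in $\|\overline\bBV\|_{s,R}$, $\|\nabla\Bu\|_{2,R}$, $R$, and $(R-\rho)^{-1}$. The main bookkeeping task is checking that the powers of $\|\overline\bBV\|_{s,R}$ and $\|\nabla\Bu\|_{2,R}$ collapse to exactly $\tfrac12$ and $\tfrac12$ in the ``interior'' term and to $1$ and $0$ in the ``boundary'' term, and that the powers of $R$ and $(R-\rho)^{-1}$ match \eqref{u_Ls_estimate}; cross terms, if any, should be absorbable by Young's inequality into these two. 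The only real obstacle is this exponent arithmetic — getting $\theta(s)$ right and confirming the $R$-powers line up with $\tfrac{3s-6}{4s}$ — but there is no analytic difficulty beyond Sobolev embedding, Hölder, and the previous lemma.
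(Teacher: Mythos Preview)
Your interpolation strategy does not recover the stated estimate; the exponent bookkeeping does \emph{not} collapse the way you hope. Write $\theta=(s+6)/(4s)$ so that $\|\Bu\eta_1\|_{4s/(s+2)}\le\|\Bu\eta_1\|_{2}^{\theta}\|\Bu\eta_1\|_{6}^{1-\theta}$. The Sobolev bound you invoke for the $L^6$ factor produces a leading term $\|\nabla\Bu\|_{2,R}$ that carries \emph{no} $\overline\bBV$-dependence. Pairing that with the leading $L^2$ term $R^{\frac{3(s-2)}{4s}}\|\overline\bBV\|_{s,R}^{1/2}\|\nabla\Bu\|_{2,R}^{1/2}$ from Lemma~\ref{Lemma_u_L2} gives
\[
R^{\frac{3(s-2)(s+6)}{16s^2}}\,\|\overline\bBV\|_{s,R}^{\frac{s+6}{8s}}\,\|\nabla\Bu\|_{2,R}^{\frac{7s-6}{8s}},
\]
which for every $s>2$ has a strictly smaller power of $\|\overline\bBV\|_{s,R}$ and a strictly larger power of $\|\nabla\Bu\|_{2,R}$ than the target $\|\overline\bBV\|_{s,R}^{1/2}\|\nabla\Bu\|_{2,R}^{1/2}$. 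You cannot absorb this into $\|\overline\bBV\|_{s,R}^{1/2}\|\nabla\Bu\|_{2,R}^{1/2}+(R-\rho)^{-1}R^{\frac{3s-6}{4s}}\|\overline\bBV\|_{s,R}$ by Young: any splitting that raises the $\overline\bBV$-exponent back to $1/2$ in one piece leaves a pure power of $\|\nabla\Bu\|_{2,R}$ in the other, which is not dominated by either target term. So the ``main bookkeeping task'' you flag as the only obstacle is in fact where the argument fails.

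The paper's proof avoids this by never passing through $L^6$. It writes $\int|\Bu|^{4s/(s+2)}\eta_1^{4s/(s+2)}$ directly, substitutes $\Bu=\div\overline\bBV$ in one factor, and integrates by parts. The derivative landing on $|\Bu|^{(2s-4)/(s+2)}$ is paired via H\"older with $\overline\bBV\in L^s$ and $\nabla\Bu\in L^2$, and the remaining factor of $\Bu$ sits exactly in $L^{4s/(s+2)}$, so the estimate closes on itself. After Young one obtains
\[
\|\Bu\eta_1\|_{4s/(s+2),R}^{\frac{4s}{s+2}}\lesssim \|\overline\bBV\|_{s,R}^{\frac{2s}{s+2}}\|\nabla\Bu\|_{2,R}^{\frac{2s}{s+2}}+R^{\frac{3s-6}{s+2}}(R-\rho)^{-\frac{4s}{s+2}}\|\overline\bBV\|_{s,R}^{\frac{4s}{s+2}},
\]
which is precisely \eqref{u_Ls_estimate} upon taking the $(s+2)/(4s)$ power. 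The point is that keeping $\overline\bBV$ inside the integral at the $L^{4s/(s+2)}$ level is what forces the clean $\tfrac12,\tfrac12$ split; once you detour through Sobolev that coupling is lost.
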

\begin{proof}
     Using $\Bu = \nabla \cdot \overline\bBV$, integration by parts, H\"older's inequality, and Young's inequality, we have
     \begin{align*}
         \|\Bu \eta_1\|_{\frac{4s}{s+2},R}^{\frac{4s}{s+2}} 
          = 
          \int_{B_R} |\Bu|^{\frac{4s}{s+2}} \eta_1^{ \frac{4s}{s+2} } 
          &= \int_{B_R} \partial_i \overline V_{ij} u_j |\Bu|^{\frac{2s-4}{s+2}} \eta_1^{\frac{4s}{s+2}}\\
           &= - \int_{B_R} \overline V_{ij} \partial_i (u_j|\Bu|^{\frac{2s-4}{s+2}}) \eta_1^{\frac{4s}{s+2}} - \frac{4s}{s+2} \int_{B_R} \overline V_{ij} u_j |\Bu|^{\frac{2s-4}{s+2}}\partial_i \eta_1 \eta_1^{\frac{3s-2}{s+2}}\\
           &\lesssim \| \overline \bBV \|_{s,R} \| \nabla \Bu \|_{2,R}\|\Bu \eta_1\|_{\frac{4s}{s+2},R}^{\frac{2s-4}{s+2}} + (R-\rho)^{-1} \| \overline \bBV \|_{\frac{4s}{s+2},R}\|\Bu \eta_1\|_{\frac{4s}{s+2},R}^{\frac{3s-2}{s+2}}\\
           &\lesssim \| \overline \bBV \|_{s,R}^{\frac{2s}{s+2}} \| \nabla \Bu \|_{2,R}^{\frac{2s}{s+2}} + R^{\frac{3s-6}{s+2}}(R-\rho)^{-\frac{4s}{s+2}} \| \overline \bBV \|_{s,R}^{\frac{4s}{s+2}}.
     \end{align*}
     The lemma is proved.
\end{proof}

Next, we use the equation \eqref{SNS} along with the main condition \eqref{V_condition} to derive an energy estimate.

\begin{proposition}\label{Prop_energy}
     Let $2 < \rho < R$, $s \in [3,6]$, $\bBV$ satisfy \eqref{V_condition}, and $\eta_2$ be a cut-off function such that $\eta_2 = 1$ in $B_{\rho}$, $\eta_2 = 0$ in $B_{(R+\rho)/2}^\complement$,  $|\nabla \eta_2| \lesssim (R-\rho)^{-1}$. Then
     \begin{equation}
         \label{energy}
         \begin{aligned}
          \| \nabla \Bu \, \sqrt{\eta_2} \|_{2,R}^2 \lesssim R^{10}(R- \rho)^{-10} (\log R)^{\frac{s-3}{s+6}} \left( \int_{B_{(R+\rho)/2} \setminus B_\rho} |\nabla \Bu|^2 \right)^{\frac{1}{2}} \left( \| \nabla \Bu \|_{2,R}^{\frac{12-s}{s+6}}+ 1 \right). 
         \end{aligned}
     \end{equation}
\end{proposition}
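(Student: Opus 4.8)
The plan is to run the localized energy identity and exploit three structural features available here: the anti-symmetry of $\bBV$, the identity $\Bu=\nabla\cdot\overline\bBV$ (which lets one trade a derivative of $\Bu$ for a factor of $\overline\bBV$), and a Bogovskii correction for the pressure. First I would test \eqref{SNS} against $\Bu\eta_2$ and integrate by parts, using $\div\Bu=0$, to get
\begin{equation*}
\int_{B_R}|\nabla\Bu|^2\eta_2=-\int_{B_R}u_j\,\partial_iu_j\,\partial_i\eta_2+\frac12\int_{B_R}|\Bu|^2\,\Bu\cdot\nabla\eta_2+\int_{B_R}p\,\Bu\cdot\nabla\eta_2=:I_1+I_2+I_3 .
\end{equation*}
All three integrands are supported in the annulus $A:=B_{(R+\rho)/2}\setminus B_\rho$, on which $\eta_1\equiv1$, so Lemmas \ref{Lemma_u_L2} and \ref{Lemma_u_Ls} may be used to bound norms of $\Bu$ on $A$ by the corresponding norms of $\Bu\eta_1$. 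The goal is to bound each $I_k$ by the right side of \eqref{energy}; the essential requirement is that every bound retain one factor of $\bigl(\int_A|\nabla\Bu|^2\bigr)^{1/2}$, since this is what yields the Saint-Venant differential inequality later.

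For $I_1$, Cauchy--Schwarz gives $|I_1|\lesssim(R-\rho)^{-1}\bigl(\int_A|\nabla\Bu|^2\bigr)^{1/2}\,\|\Bu\eta_1\|_{2,R}$, with the last factor controlled by Lemma \ref{Lemma_u_L2}. For $I_2$, I would write $\Bu\cdot\nabla\eta_2=\partial_i\overline V_{ij}\,\partial_j\eta_2$ and integrate by parts in $\partial_i$; the term with $|\Bu|^2\,\partial_i\partial_j\eta_2$ contracted against $\overline V_{ij}$ \emph{vanishes}, because $\overline\bBV$ is anti-symmetric and $\partial_i\partial_j\eta_2$ symmetric, leaving $I_2=-\int\overline V_{ij}\,u_k\,\partial_iu_k\,\partial_j\eta_2$, which is $\lesssim(R-\rho)^{-1}\|\overline\bBV\|_{s,R}\,\|\Bu\eta_1\|_{2s/(s-2)}\,\bigl(\int_A|\nabla\Bu|^2\bigr)^{1/2}$. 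Since $4s/(s+2)\le2s/(s-2)\le6$ for $s\in[3,6]$, I would control $\|\Bu\eta_1\|_{2s/(s-2)}$ by interpolating Lemma \ref{Lemma_u_Ls} against the Sobolev bound for $\|\Bu\eta_1\|_{6}$ (whose cut-off derivative terms reduce again to Lemma \ref{Lemma_u_L2}); this interpolation is precisely what produces the exponent $(12-s)/(s+6)$ on $\|\nabla\Bu\|_{2,R}$ and, after inserting \eqref{V_condition}, the factor $(\log R)^{(s-3)/(s+6)}$.

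The pressure term $I_3$ is the crux. Choosing $\eta_2$ so that $\Bu\cdot\nabla\eta_2$ has compact support in $A$ (it automatically has zero mean), I would solve $\div\BF=\Bu\cdot\nabla\eta_2$ in $A$ by the Bogovskii operator; since $A$ may be a thin annulus, its Bogovskii constant must be tracked, and by covering $A$ with balls of radius comparable to $R-\rho$ (in the spirit of \cite{Tsai21}) it is bounded by a power of $R/(R-\rho)$, giving $\|\nabla\BF\|_{L^q(A)}\lesssim C(A,q)(R-\rho)^{-1}\|\Bu\|_{L^q(A)}$ for $q\in\{2,\,2s/(s-2)\}$. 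Then, substituting $\nabla p=\Delta\Bu-\Bu\cdot\nabla\Bu$ from \eqref{SNS} and integrating by parts, $I_3=\int p\,\div\BF=-\int\nabla p\cdot\BF=\int\nabla\Bu:\nabla\BF-\int u_iu_j\,\partial_iF_j$. The first integral is $\le\bigl(\int_A|\nabla\Bu|^2\bigr)^{1/2}\|\nabla\BF\|_{L^2(A)}$; in the second I would insert $u_i=\partial_k\overline V_{ki}$ and integrate by parts once more, the term $\overline V_{ki}\,u_j\,\partial_k\partial_iF_j$ again dropping out by anti-symmetry, leaving $\int\overline V_{ki}\,\partial_ku_j\,\partial_iF_j$, bounded by $\|\overline\bBV\|_{s,R}\bigl(\int_A|\nabla\Bu|^2\bigr)^{1/2}\|\nabla\BF\|_{L^{2s/(s-2)}(A)}$. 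Both pieces now carry the required gradient factor, and the $\Bu$-norms entering through $\BF$ are handled by Lemmas \ref{Lemma_u_L2}--\ref{Lemma_u_Ls} and interpolation.

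Finally I would substitute \eqref{V_condition} in the form $\|\overline\bBV\|_{s,R}\lesssim R^{(s+6)/(3s)}(\log R)^{(s-3)/(3s)}$ into the three bounds: after collecting exponents, $\|\overline\bBV\|_{s,R}$ enters to the power $3s/(s+6)$, which turns the $R$-factor into $R^{1}$ and the logarithm into $(\log R)^{(s-3)/(s+6)}$, while every other power of $R$ and of $(R-\rho)^{-1}$ — the Bogovskii constant and the lower-order contributions of the two lemmas included — is absorbed into the deliberately wasteful factor $R^{10}(R-\rho)^{-10}$. I expect the main obstacle to be exactly the pressure estimate: manufacturing a $\bigl(\int_A|\nabla\Bu|^2\bigr)^{1/2}$ factor there (which is what forces both the anti-symmetry cancellations and the extra integration by parts through $\overline\bBV$) while keeping the Bogovskii constant on a possibly thin annulus under quantitative control.
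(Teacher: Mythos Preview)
Your proposal is correct and is essentially the paper's proof: the paper tests \eqref{SNS} against $\Bu\eta_2-\Bw$ (with $\Bw$ the Bogovskii solution of $\div\Bw=\Bu\cdot\nabla\eta_2$ on the annulus, constant controlled via \cite{Tsai21}) so that the pressure term vanishes at the outset, while you test against $\Bu\eta_2$ and only afterwards introduce the Bogovskii field to handle $I_3$ --- but the resulting four integrals (your $I_1$, $I_2$, and the two pieces of $I_3$) coincide with the paper's $I,III,II,IV$, and are estimated by the same combination of anti-symmetry cancellations, Lemmas \ref{Lemma_u_L2}--\ref{Lemma_u_Ls}, and the interpolation between $L^{4s/(s+2)}$ and $L^6$. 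The only cosmetic variant is that in $I_2$ you kill $\overline V_{ij}\,|\Bu|^2\,\partial_i\partial_j\eta_2$ using the symmetry of $\partial_i\partial_j\eta_2$, whereas the paper (working from the undifferentiated form $-\int(\Bu\cdot\nabla\Bu)\cdot\Bu\eta_2$) kills $\overline V_{ki}\,\partial_{ik}u_j$ using the symmetry of $\partial_{ik}u_j$; both land on the same integral.
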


\begin{proof}
    First, using the Bogovskii map on an annulus, we construct a function to take care of the pressure term. For the construction of the Bogovskii map, see \cite{Galdi11}*{Theorem III.3.5} or \cite{Tsai18}*{Section 2.8}.
    Note that 
    \begin{align*}
        \int_{B_{(R+\rho)/2} \setminus B_\rho} \Bu \cdot \nabla \eta_2
        \thinspace dx
        =-\int _{\partial B_{\rho}} \Bu\cdot \Bn \thinspace d\sigma =0.
    \end{align*}
    Using \cite{Tsai21}*{Lemma 3}, for any $q\in (1,\infty)$, there exists a constant $c=c(q)>0$, independent of $R$ and $\rho$, and a function $\Bw\in W^{1,q}_0 (B_{(R+\rho)/2} \setminus B_\rho)$ such that $\div \Bw =  \Bu \cdot \nabla \eta_2$ and
    \begin{align}\label{bogo_estimate}
        \int_{B_{(R+\rho)/2} \setminus B_\rho} |\nabla \Bw|^q \thinspace dx
        \leq
        c\, \left( \frac{\rho}{R-\rho}\right)^q \int_{B_{(R+\rho)/2} \setminus B_\rho} | \Bu \cdot \nabla \eta_2|^q \thinspace dx.
    \end{align}
    We extend $\Bw=0$ in $B_\rho$. Now we multiply the equation \eqref{SNS} by $\Bu \eta_2 - \Bw$ and integrate by parts in $B_R$, we will have
    \begin{align}\label{integal_split}
    \begin{aligned}
    \int |\nabla \Bu|^2 \eta_2 \, dx =& - \int \nabla \eta_2\cdot \nabla \Bu \cdot  \Bu \, dx + \int \nabla \Bu \cdot \nabla \Bw\, dx  \\ 
     &- \int (\Bu \cdot \nabla \Bu) \cdot \Bu \eta_2 \, dx + \int (\Bu \cdot \nabla \Bu) \cdot \Bw \, dx \\
     =&: I + II + III + IV.
    \end{aligned}
\end{align}
Here the pressure term vanishes since $\Bu \eta_2 - \Bw$ is divergence free. By H\"older's inequality and \eqref{u_L2_estimate}, we can estimate
\begin{equation}\label{estimate_I}
 \begin{aligned}
    |I| \lesssim& (R-\rho)^{-1}  \left( \int_{B_{(R+\rho)/2} \setminus B_\rho} |\nabla \Bu|^2 \right)^{\frac{1}{2}} \left( \int_{B_{(R+\rho)/2} \setminus B_\rho} |\Bu|^2 \right)^{\frac{1}{2}} \\
    \lesssim& (R-\rho)^{-1}  \left( \int_{B_{(R+\rho)/2} \setminus B_\rho} |\nabla \Bu|^2 \right)^{\frac{1}{2}} \left( \int_{B_R} |\Bu|^2 \eta_1^2 \right)^{\frac{1}{2}}\\
     \lesssim& (R-\rho)^{-1}  \left( \int_{B_{(R+\rho)/2} \setminus B_\rho} |\nabla \Bu|^2 \right)^{\frac{1}{2}} \left[  R^{\frac{3(s-2)}{4s}}\|\overline \bBV\|_{s,R}^{\frac{1}{2}} \|\nabla \Bu\|_{2,R}^{\frac{1}{2}} +  R^{\frac{3(s-2)}{2s}} (R-\rho)^{-1} \|\overline \bBV\|_{s,R} \right],
\end{aligned}   
\end{equation}
where $\eta_1$ is the cut-off function defined in Lemma \ref{Lemma_u_L2}. Similarly, by H\"older's inequality, \eqref{bogo_estimate} with $q = 2$, and \eqref{u_L2_estimate}, we can estimate
\begin{equation}\label{estimate_II}
 \begin{aligned}
    |II| 
    \lesssim& 
    \left( \int_{B_{(R+\rho)/2} \setminus B_\rho} |\nabla \Bu|^2 \right)^{\frac{1}{2}} \left( \int_{B_{(R+\rho)/2} \setminus B_\rho} |\nabla \Bw|^2 \right)^{\frac{1}{2}} \\
    \lesssim& 
    \rho\, (R-\rho)^{-2}  \left( \int_{B_{(R+\rho)/2} \setminus B_\rho} |\nabla \Bu|^2 \right)^{\frac{1}{2}} \left( \int_{B_{(R+\rho)/2} \setminus B_\rho} |\Bu|^2 \right)^{\frac{1}{2}}\\
     \lesssim& 
     \rho\, (R-\rho)^{-2}  \left( \int_{B_{(R+\rho)/2} \setminus B_\rho} |\nabla \Bu|^2 \right)^{\frac{1}{2}} \left[  R^{\frac{3(s-2)}{4s}}\|\overline \bBV\|_{s,R}^{\frac{1}{2}} \|\nabla \Bu\|_{2,R}^{\frac{1}{2}} +  R^{\frac{3(s-2)}{2s}} (R-\rho)^{-1} \|\overline \bBV\|_{s,R} \right].
\end{aligned}   
\end{equation}

Putting \eqref{estimate_I}, \eqref{estimate_II} together, 
    \begin{multline}
    \label{estimate_I+II}
        |I|+|II|
        \lesssim
        \\
        R(R-\rho)^{-2}  \left( \int_{B_{(R+\rho)/2} \setminus B_\rho} |\nabla \Bu|^2 \right)^{\frac{1}{2}} \left[  R^{\frac{3(s-2)}{4s}}\|\overline \bBV\|_{s,R}^{\frac{1}{2}} \|\nabla \Bu\|_{2,R}^{\frac{1}{2}} +  R^{\frac{3(s-2)}{2s}} (R-\rho)^{-1} \|\overline \bBV\|_{s,R} \right].
    \end{multline}

To estimate $III$ and $IV$, we replace $\Bu$ by $\div \overline\bBV$. Using integration by parts, we have
\begin{align*}
    III =&  -\int_{B_R} (\partial_k \overline{V}_{ki}) (\partial_i u_j) u_j \eta_2 \\
    =&  \int_{B_R} \overline{V}_{ki} \big[ (\partial_{ik} u_j) u_j \eta_2 + (\partial_i u_j) ( \partial_k u_j) \eta_2 + (\partial_i u_j) u_j  ( \partial_k\eta_2) \big] \\ 
    =& \int_{B_{(R+\rho)/2} \setminus B_\rho} \overline{V}_{ki} (\partial_i u_j) u_j  ( \partial_k\eta_2),
\end{align*}
where in the last line, we used the fact that $\overline{\bBV}$ is anti-symmetric. 
Then by H\"older's inequality, we can estimate
\begin{equation}\label{estimate_III}
\begin{aligned}
    |III| 
    &\lesssim  (R-\rho)^{-1} \|\overline \bBV\|_{s,R} \left( \int_{B_{(R+\rho)/2} \setminus B_\rho} |\nabla \Bu|^2 \right)^{\frac{1}{2}} \left( \int_{B_{(R+\rho)/2} \setminus B_\rho}| \Bu|^{\frac{2s}{s-2}} \right)^{\frac{s-2}{2s}}\\
    &\lesssim  (R-\rho)^{-1} \|\overline \bBV\|_{s,R} \left( \int_{B_{(R+\rho)/2} \setminus B_\rho} |\nabla \Bu|^2 \right)^{\frac{1}{2}} \| \Bu \eta_1^{2}\|_{\frac{2s}{s-2}, R}.
\end{aligned}    
\end{equation}
Similarly for $IV$, we have
\begin{align*}
    IV =&  \int_{B_{(R+\rho)/2} \setminus B_\rho} (\partial_k \overline{V}_{ki}) (\partial_i u_j) w_j \, dx \\
    =& - \int_{B_{(R+\rho)/2} \setminus B_\rho} \overline{V}_{ki} \big[ (\partial_{ik} u_j) w_j + (\partial_i u_j) ( \partial_k w_j)\big]\\ 
    =& -\int_{B_{(R+\rho)/2} \setminus B_\rho} \overline{V}_{ki} (\partial_i u_j) ( \partial_k w_j).
\end{align*}
Then by H\"older's inequality, \eqref{bogo_estimate} with $q = \frac{2s}{s-2}$, we can estimate
\begin{equation}\label{estimate_IV}
\begin{aligned}
    & |IV| \\
    &\lesssim  
    \rho\, (R-\rho)^{-2} \|\overline \bBV\|_{s,R} \left( \int_{B_{(R+\rho)/2} \setminus B_\rho} |\nabla \Bu|^2 \right)^{\frac{1}{2}} \left( \int_{B_{(R+\rho)/2} \setminus B_\rho}| \Bu|^{\frac{2s}{s-2}} \right)^{\frac{s-2}{2s}}\\
    &\lesssim  
    \rho\, (R-\rho)^{-2} \|\overline \bBV\|_{s,R} \left( \int_{B_{(R+\rho)/2} \setminus B_\rho} |\nabla \Bu|^2 \right)^{\frac{1}{2}} \| \Bu \eta_1^{2}\|_{\frac{2s}{s-2}, R},
\end{aligned}    
\end{equation}
where $\eta_1$ is the cut-off function defined in Lemma \ref{Lemma_u_L2}. 
Now putting \eqref{estimate_III},  \eqref{estimate_IV} together, one can obtain
\begin{equation}\label{estimate34}
\begin{aligned}
    |III|+|IV|
    \lesssim  
    R\, (R-\rho)^{-2} \|\overline \bBV\|_{s,R} \left( \int_{B_{(R+\rho)/2} \setminus B_\rho} |\nabla \Bu|^2 \right)^{\frac{1}{2}} \| \Bu \eta_1^{2}\|_{\frac{2s}{s-2}, R}.
\end{aligned}    
\end{equation}

To estimate $\| \Bu \eta_1^{2}\|_{\frac{2s}{s-2}, R}$, we use the fact that $s \in [3,6]$, and interpolation inequality to obtain
\begin{align}
    \label{inter}
        \|\Bu \eta_1^2 \| _{\frac{2s}{s-2},R}
        &\leq
        \|\Bu \eta_1^2 \|_{\frac{4s}{s+2},R}^{\frac{4s-12}{s+6}}
        \,
        \|\Bu \eta_1^2 \|_{6,R}^{\frac{18-3s}{s+6}}.
\end{align}
By Sobolev inequality and \eqref{u_L2_estimate}, we have
\begin{align*}
    \begin{aligned}
        \|\Bu \eta_1^2 \|_{6,R}
        &\lesssim
        \|\nabla (\Bu \eta_1^2)\|_{2,R} 
        \\
        &\lesssim
        \|\nabla \Bu \, \eta_1^2 \|_{2,R}
        +  \|\Bu \otimes \nabla (\eta_1^2)\|_{2,R}
        \\
        &\lesssim
        \|\nabla \Bu\|_{2,R}
        + (R-\rho)^{-1} \|\Bu \eta_1\|_{2,R}
        \\
        &\lesssim
        \|\nabla \Bu\|_{2,R}
        + (R-\rho)^{-1} 
        \left( 
        R^{\frac{3(s-2)}{4s}} 
        \|\overline{\bBV}\|_{s,R}^{\frac12}
        \|\nabla \Bu\|_{2,R}^{\frac12}
        + (R-\rho)^{-1}
        R^{\frac{3(s-2)}{2s}}  
        \|\overline{\bBV}\|_{s,R}
        \right).
    \end{aligned}
\end{align*}
This together with \eqref{u_Ls_estimate}, \eqref{inter} implies
\begin{align}\label{nonlinear_estimate}
    \begin{aligned}
        \| \Bu \eta_1^2 \|_{\frac{2s}{s-2},R} 
        &\lesssim 
        \|\overline{\bBV}\|_{s,R}^{\frac{2s-6}{s+6}}
        \|\nabla \Bu\|_{2,R} ^{\frac{12-s}{s+6}}
        \\
        &+
        (R-\rho)^{-\frac{18-3s}{s+6}}
        R^{\frac{9(s-2)(6-s)}{4s(s+6)}}
        \|\overline{ \bBV}\|_{s,R}^{\frac12} 
        \|\nabla \Bu\|_{2,R}^{\frac12}
        \\
        &+
        (R-\rho)^{-\frac{6(6-s)}{s+6}}
        R^{\frac{9(s-2)(6-s)}{2s(s+6)}}
        \|\overline{\bBV}\|_{s,R}^{\frac{12-s}{s+6}}
        \|\nabla \Bu\|_{2,R}^{\frac{2s-6}{s+6}}
        \\
        &+ 
        (R-\rho)^{-\frac{4s-12}{s+6}}
        R^{\frac{3(s-2)(s-3)}{s(s+6)}}
        \|\overline{\bBV}\|_{s,R}^{\frac{4s-12}{s+6}}
        \|\nabla \Bu\|_{2,R}^{\frac{18-3s}{s+6}}
        \\
        &+
        (R-\rho)^{-1} R^{\frac{3(s-2)}{4s}}
        \|\overline{\bBV}\|_{s,R}^{\frac{5s-6}{2(s+6)}}
        \|\nabla \Bu\|_{2,R}^{\frac{18-3s}{2(s+6)}}
        \\
        &+
        (R-\rho)^{-\frac{2(12-s)}{s+6}}
        R^{\frac{3(12-s)(s-2)}{2s(s+6)}}
        \|\overline{\bBV}\|_{s,R}.
    \end{aligned}
\end{align}
Combining \eqref{integal_split},\eqref{estimate_I+II}, \eqref{estimate34}, and \eqref{nonlinear_estimate}, and using the condition \eqref{V_condition}, we have
\begin{align}
         \label{energy2}
         \begin{aligned}
         \int |\nabla \Bu|^2 \eta_2 \, dx
          \lesssim & 
          R\,(R- \rho)^{-2} \left( \int_{B_{(R+\rho)/2} \setminus B_\rho} |\nabla \Bu|^2 \right)^{\frac{1}{2}}  \times 
          \Bigg[ R(\log R)^{ \frac{s-3}{s+6} }
        \|\nabla \Bu\|_{2,R} ^{\frac{12-s}{s+6}}           
        \\
        &
        +  R^{\frac{5s-6}{6s}} \Big(\frac{R}{R-\rho} \Big) (\log R)^{ \frac{s-3}{3s} } +   R^{\frac{11s-6}{12s}}(\log R)^{ \frac{s-3}{6s} } \|\nabla \Bu\|_{2,R}^{\frac{1}{2}} 
        \\
        &+
        R^{\frac{5s-6}{4s}}\Big(\frac{R}{R-\rho} \Big)^{\frac{18-3s}{s+6}}
        (\log R)^{ \frac{s-3}{2s} }
        \|\nabla \Bu\|_{2,R}^{\frac12}
        \\
        &+R^{\frac{3s-6}{2s}}
        \Big(\frac{R}{R-\rho} \Big)^{\frac{6(6-s)}{s+6}}
        (\log R)^{ \frac{6s-18}{s(s+6)} }
        \|\nabla \Bu\|_{2,R}^{\frac{2s-6}{s+6}}
        \\
        &+ R^{\frac{2s+3}{3s}}
        \Big(\frac{R}{R-\rho} \Big)^{\frac{4s-12}{s+6}}  
        (\log R)^{ \frac{(5s-6)(s-3)}{3s(s+6)}  }
        \|\nabla \Bu\|_{2,R}^{\frac{18-3s}{s+6}}
        \\
        &+ R^{\frac{11s-6}{12s}}
        \Big(\frac{R}{R-\rho} \Big) 
        (\log R)^{  \frac{(7s+6)(s-3)}{6s(s+6)}  }
        \|\nabla \Bu\|_{2,R}^{\frac{18-3s}{2(s+6)}}
        \\
        &+ R^{\frac{7s-12}{6s}}
        \Big(\frac{R}{R-\rho} \Big)^{\frac{2(12-s)}{s+6}}        
        (\log R)^{ \frac{2s-6}{3s} }
          \Bigg]. 
         \end{aligned}
     \end{align}
We point out that the first term $ R(\log R)^{ \frac{s-3}{s+6} }
        \|\nabla \Bu\|_{2,R} ^{\frac{12-s}{s+6}}$ is the most crucial one among those in the bracket of \eqref{energy2}, in the sense that the exponents of $R$ and $\|\nabla \Bu\|_{2,R}$ are the highest when $s \in [3,6]$. 
        
Next, we enlarge the right-hand side of \eqref{energy2} by raising the exponent of $R$ for each term in the bracket to $1$, and raising the exponent of $\frac{R}{R-\rho}$ to $9$, while changing the exponent of $\log R$ to $\frac{s-3}{s+6}$. We can always do so since the exponent of $R$ for each term is less or equal to $1$, and when it is equal to $1$, the corresponding exponent of $\log R$ is equal to $\frac{s-3}{s+6}$. Therefore, we obtain
\begin{align*}
 \int |\nabla \Bu|^2 \eta_2 \, dx \lesssim &  R^{10}(R- \rho)^{-10} (\log R)^{\frac{s-3}{s+6}} \left( \int_{B_{(R+\rho)/2} \setminus B_\rho} |\nabla \Bu|^2 \right)^{\frac{1}{2}} \times\\
 & \times \left( \| \nabla \Bu \|_{2,R}^{\frac{12-s}{s+6}}+ \|\nabla \Bu\|_{2,R}^{\frac{1}{2}} + \|\nabla \Bu\|_{2,R}^{\frac{2s - 6}{s+6}} +\|\nabla \Bu\|_{2,R}^{\frac{18-3s}{s+6}} + \|\nabla \Bu\|_{2,R}^{\frac{18-3s}{2(s+6)}} +1 \right). 
\end{align*}
Then \eqref{energy} follows from Young's inequality.
\end{proof}

In the following, we use the energy inequality obtained above to derive an upper bound of $\|\nabla \Bu\|_{2,R}$ through an iteration argument.

\begin{lemma}\label{Lemma_energy_upperbound}
    Assume that $\bBV$ satisfies \eqref{V_condition} for $s \in (3,6]$, then
    \begin{equation}
        \label{grad_u_L2_upperbound}
        \int_{B_R} |\nabla \Bu|^2 \lesssim \log R \quad \forall R > 2.
    \end{equation}
\end{lemma}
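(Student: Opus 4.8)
The plan is to feed the energy inequality \eqref{energy} from Proposition \ref{Prop_energy} into a Saint-Venant type iteration argument. For brevity write $E(R) := \|\nabla\Bu\|_{2,R}^2 = \int_{B_R} |\nabla\Bu|^2$; it is a nondecreasing function of $R$. Note that $\int_{B_{(R+\rho)/2}\setminus B_\rho}|\nabla\Bu|^2 = E((R+\rho)/2) - E(\rho) \le E(R)-E(\rho)$, and that the left-hand side of \eqref{energy} dominates $E(\rho)$ since $\eta_2 \equiv 1$ on $B_\rho$. Thus \eqref{energy} gives, for $2 < \rho < R$,
\begin{equation*}
    E(\rho) \lesssim R^{10}(R-\rho)^{-10}(\log R)^{\frac{s-3}{s+6}} \big(E(R)-E(\rho)\big)^{1/2}\Big(E(R)^{\frac{12-s}{2(s+6)}} + 1\Big).
\end{equation*}

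The first step is to reduce to a clean autonomous recursion. I would restrict to a dyadic-type sequence: fix $R>2$ large and choose $\rho = R/2$ (or more generally $\rho = R(1-\theta)$ for a small fixed $\theta$), so that the geometric factors $R^{10}(R-\rho)^{-10}$ become an absolute constant. This yields
\begin{equation*}
    E(R/2) \lesssim (\log R)^{\frac{s-3}{s+6}}\big(E(R)-E(R/2)\big)^{1/2}\big(E(R)^{\frac{12-s}{2(s+6)}}+1\big).
\end{equation*}
The key point is that the exponent $\tfrac{12-s}{2(s+6)}$ on $E(R)$ satisfies $\tfrac12 + \tfrac{12-s}{2(s+6)} < 1$ precisely when $s>3$ (the sum equals $\tfrac{9}{s+6}$, which is $<1$ iff $s>3$), so the total power of $E$ on the right is strictly less than $1$. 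This is the mechanism that forces at most logarithmic growth, and it is why the lemma excludes $s=3$.

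The second step is the iteration itself. Set $R_k = 2^k R_0$ for a fixed large $R_0$, and $a_k = E(R_k)$. The recursion reads $a_{k-1} \lesssim (\log R_k)^{\beta}(a_k - a_{k-1})^{1/2}(a_k^{\gamma}+1)$ with $\beta = \tfrac{s-3}{s+6} \in (0,1)$ and $2\gamma + 1 = \tfrac{18}{s+6} =: \alpha < 2$, i.e. $\gamma < 1/2$. Squaring, $a_{k-1}^2 \lesssim (\log R_k)^{2\beta}(a_k - a_{k-1})(a_k^{2\gamma}+1)$. There are two cases. If at some stage $a_k \le 1$ we are in a bounded regime and \eqref{grad_u_L2_upperbound} is immediate. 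Otherwise $a_k > 1$ for all $k$, and the recursion simplifies to $a_{k-1}^2 \lesssim (\log R_k)^{2\beta} a_k^{2\gamma}(a_k - a_{k-1})$; since $2\beta < 2$ and $\log R_k = \log R_0 + k\log 2 \lesssim k$, one gets $a_{k-1}^{2-2\gamma} \lesssim k^{2\beta} \cdot \tfrac{a_k - a_{k-1}}{a_{k-1}^{2\gamma}} \cdot a_{k-1}^{2\gamma} \cdot a_{k-1}^{-2\gamma}$ — more cleanly, using $a_k \ge a_{k-1}$ one writes $a_{k-1}^{2}/a_k^{2\gamma} \lesssim k^{2\beta}(a_k - a_{k-1})$, hence $a_{k-1}^{2-2\gamma} \lesssim k^{2\beta}(a_k - a_{k-1})$. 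Summing a suitable rearrangement of these inequalities, or more directly running the standard Saint-Venant/Gronwall-type argument on the continuous version $E(R)^{2-2\gamma} \lesssim (\log R)^{2\beta} R E'(R)$ (which gives $\tfrac{d}{dR}\big(E(R)^{2\gamma-1}\big) \gtrsim -\tfrac{1}{R(\log R)^{2\beta}}$, integrable away from $R=\infty$ since $2\beta<1$... ) — here one must be slightly careful because $2\beta$ can be close to $1$, so I would instead keep $R^{10}(R-\rho)^{-10}$ exact on a geometric scale and absorb the $(\log R)^{2\beta}$ directly, concluding $a_k \lesssim \log R_k$. Filling in between dyadic scales using monotonicity of $E$ then gives $E(R) \lesssim \log R$ for all $R>2$.

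The main obstacle, and the step requiring the most care, is the summation/iteration bookkeeping in the case $a_k\to\infty$: one has to extract from $a_{k-1}^{2-2\gamma} \lesssim (\log R_k)^{2\beta}(a_k-a_{k-1})$ a bound of the form $a_k \lesssim \log R_k$, which is not a routine geometric iteration because the increments $a_k - a_{k-1}$ are telescoping rather than multiplicative. The trick is that if $a_k$ grew faster than $\log R_k$ (say $a_k \gg (\log R_k)^{1+\delta}$ along a subsequence), then $a_{k-1}^{2-2\gamma}/(\log R_k)^{2\beta}$ would be much larger than $a_k$, forcing $a_k - a_{k-1} \gg a_k$, a contradiction with monotonicity once we check the exponents: $(2-2\gamma)(1+\delta) > 2\beta + (1+\delta)$ reduces to $(1-2\gamma)(1+\delta) > 2\beta$, i.e. $\tfrac{2s-6}{s+6}(1+\delta) > \tfrac{2s-6}{s+6}\cdot\tfrac{?}{?}$ — the clean statement is $1-2\gamma = \tfrac{2s-6}{s+6} > 2\beta$? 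No: $2\beta = \tfrac{2s-6}{s+6} = 1-2\gamma$ exactly. So the exponents match and the argument is borderline; this forces the extra $\log$ rather than boundedness, and is exactly the content of the lemma. I would therefore phrase the iteration to track the borderline equality $1-2\gamma = 2\beta$ explicitly, showing $a_{k}/\log R_k$ stays bounded by a bootstrap on $k$.
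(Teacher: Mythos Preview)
Your starting point is correct, but the iteration has a genuine gap. The step ``using $a_k \ge a_{k-1}$ one writes $a_{k-1}^2/a_k^{2\gamma} \lesssim k^{2\beta}(a_k - a_{k-1})$, hence $a_{k-1}^{2-2\gamma} \lesssim k^{2\beta}(a_k - a_{k-1})$'' fails: since $a_k \ge a_{k-1}$ and $\gamma > 0$ you only get $a_{k-1}^2/a_k^{2\gamma} \le a_{k-1}^{2-2\gamma}$, so the implication runs the wrong way. The continuous analogue inherits the same defect, and your integrability claim is backwards: $\int^\infty \tfrac{dR}{R(\log R)^{2\beta}}$ \emph{diverges} for $2\beta \le 1$ (and here $2\beta = \tfrac{2(s-3)}{s+6}\in(0,\tfrac12]$, never close to $1$). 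Were the ODE valid, that divergence would force $E$ to be bounded, not merely $\lesssim\log R$ --- a sign that the passage from the discrete recursion to the ODE has lost information. The subsequence heuristic in your last paragraph is not a proof either: nothing prevents $a_{k-1}$ from being small on the subsequence where $a_k$ is large.

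The paper avoids all of this by \emph{not} exploiting the difference structure at this stage. It crudely bounds the annulus integral by $\|\nabla\Bu\|_{2,R}$, obtaining
\[
E(\rho)\ \lesssim\ R^{10}(R-\rho)^{-10}(\log R)^{\frac{s-3}{s+6}}\, E(R)^{\frac{9}{s+6}},
\]
and since $\tfrac{9}{s+6}<1$ for $s>3$, Young's inequality with conjugate exponents $\tfrac{s+6}{9}$, $\tfrac{s+6}{s-3}$ yields
\[
E(\rho)\ \le\ \varepsilon\, E(R)\ +\ C_\varepsilon\Big(\tfrac{R}{R-\rho}\Big)^{\frac{10(s+6)}{s-3}}\log R,
\]
the factor $(\log R)^{\frac{s-3}{s+6}}$ becoming exactly $\log R$ after raising to the power $\tfrac{s+6}{s-3}$. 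This is a standard absorption inequality; Giaquinta's iteration (radii $t_{i+1}-t_i=2^{-i-1}(R-\rho)$, $\varepsilon$ small enough to beat the geometric growth of the prefactor, then $\rho=R/2$) gives $E(R/2)\lesssim\log R$ in one stroke. The Saint-Venant difference $(E(R)-E(\rho))^{1/2}$ you are trying to spend here is saved for the proof of Theorem~\ref{Thm_1} itself, where it is combined with the bound just established.
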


\begin{proof}
    If $\|\nabla \Bu\|_{2,R} \le 1$ for all $R >2$, then the proof is complete. Therefore, we may assume that $\|\nabla \Bu\|_{2,R} > 1$ when $R > R_0$ for some positive $R_0$. Fix $R_0 < \rho < R$.  From \eqref{energy}, we have
    \begin{equation}
        \label{iterative_relation}
        \begin{aligned}
        \|\nabla \Bu\|_{2,\rho}^2 \lesssim& R^{10}(R- \rho)^{-10} (\log R)^{\frac{s-3}{s+6}} \|\nabla \Bu\|_{2,R}^{\frac{18}{s+6}}\\
        \le& \Big(\frac{1}{2}\Big)^{\frac{11(s+6)}{s-3}} \|\nabla \Bu\|_{2,R}^2 +C R^{\frac{10(s+6)}{s-3}}(R-\rho)^{-\frac{10(s+6)}{s-3}}\log R,
    \end{aligned}
    \end{equation}
    where in the second inequality, we used the fact that $s > 3$ and Young's inequality to raise the power of $\|\nabla \Bu\|_{2,R}$. Then the lemma follows from a standard iteration argument, see for example, \cite{Gia}*{Lemma 3.1}. Indeed, let $t_0 = \rho$, $t_{i+1} - t_i = 2^{-i-1}(R-\rho)$, then $t_i \to R$ as $i \to \infty$. Using \eqref{iterative_relation} with $\rho, R$ replaced by $t_i, t_{i+1}$, and iterating $k$ times, we have
    \begin{align*}
        \|\nabla \Bu\|_{2,t_0}^2 \le& \Big(\frac{1}{2}\Big)^{\frac{11(s+6)k}{s-3}} \|\nabla \Bu\|_{2,t_k}^2 + C\sum_{i=0}^{k-1} \Big( \frac{1}{2} \Big)^{\frac{11(s+6)i}{s-3}} 2^{\frac{10(s+6)(i+1)}{s-3}} R^{\frac{10(s+6)}{s-3}}(R-\rho)^{-\frac{10(s+6)}{s-3}}\log R\\
        \le&  \Big(\frac{1}{2}\Big)^{\frac{11(s+6)k}{s-3}} \|\nabla \Bu\|_{2,R}^2 + C R^{\frac{10(s+6)}{s-3}}(R-\rho)^{-\frac{10(s+6)}{s-3}}\log R.
    \end{align*}
    Taking $k \to \infty$ and choosing $\rho = R/2$, we have completed the proof.
\end{proof}

Now we are ready to prove Theorem \ref{Thm_1} through a Saint-Venant type argument.

\begin{proof}[Proof of Theorem \ref{Thm_1}]
    Let $\eta$ be a cut-off function given by
    \begin{align*}
        \eta(r)=
        \begin{cases}
            1 & \text{if }r<1,
            \\
            -r+2 & \text{if }1\leq r \leq 2,
            \\
            0 & \text{if }r>2,
        \end{cases}
    \end{align*}
and let
$\varphi_R(x)= \eta(|x|/R).$ We define
$$
g(R):= \int_{\bR^3} |\nabla \Bu|^2 \varphi_R\, dx.
$$
Then
$$
g'(R) = \int_{B_{2R}\setminus B_R }|\nabla \Bu|^2 \eta'\big( \frac{|x|}{R} \big) \cdot \big(- \frac{|x|}{R^2} \big)\, dx \gtrsim R^{-1} \int_{B_{2R}\setminus B_R }|\nabla \Bu|^2 \, dx.
$$
Replacing $\rho,R$ by $R, 3R$ and choosing $\eta_2 = \varphi_R$ in \eqref{energy}, we have
\begin{align*}
    g(R) \lesssim& R^{\frac{1}{2}} (\log R)^{\frac{s-3}{s+6}}  g'(R)^{\frac{1}{2}} (g(3R)^{\frac{12-s}{2(s+6)}} + 1) \\
     \lesssim& \Big( R \log R g'(R) \Big)^{\frac{1}{2}},
\end{align*}
where we used \eqref{grad_u_L2_upperbound} to estimate $g(3R)$.
Assume that $\int_{\bR^3} |\nabla \Bu|^2 \neq 0$, then there exists $R_0 > 1$, such that $g(R)>0$ for all $R \ge R_0$. We have
$$
\frac{1}{R\log R} \lesssim \frac{g'(R)}{g(R)^2} \quad \forall R \ge R_0.
$$
Integrating both sides on $(R_0, R_1)$, we have
$$
\int_{R_0}^{R_1}\frac{1}{R\log R} \, dR \lesssim \frac{1}{g(R_0)} - \frac{1}{g(R_1)}.
$$
The left-hand side goes to infinity as $R_1 \to \infty$, while the right-hand side stays bounded. Therefore, we have a contradiction. Hence
$$
\int_{\bR^3} |\nabla \Bu|^2 = 0,
$$
which implies $\Bu$ is a constant, denoted by $\Bu_0$. By \eqref{u_L2_estimate} with $\rho=R/2$, we have
$$|\Bu_0| R^{\frac32} \lesssim \| \Bu \|_{2,R/2} \lesssim  R^{\frac{s-6}{2s}}\| \overline \bBV \|_{s,R} \lesssim R^{\frac{5s-6}{6s}}(\log R)^{\frac{s-3}{3s}} \quad \forall R > 2.$$
Therefore, $\Bu_0 = 0$. It finishes the proof of Theorem \ref{Thm_1}.
\end{proof}

\section{Proof of Theorem \ref{Thm_2}}\label{2thm}

In this section, we prove Theorem \ref{Thm_2}.

\begin{proof}[Proof of Theorem \ref{Thm_2}]

First note that the head pressure $Q$ satisfies
    \begin{align} \label{hpeq}
        -\Delta Q + \Bu \cdot \nabla Q = - |\boldsymbol{\omega}|^2
    \end{align}
where $\boldsymbol{\omega}=\curl \Bu$. Indeed, multiplying the momentum equation of \eqref{SNS} by $\Bu$ leads to
    \begin{align}\label{equation_u_and_Q}
        |\nabla \Bu|^2 
        =\frac{1}{2} \Delta (|\Bu|^2)
        -(\Bu\cdot \nabla )Q.
    \end{align}
Taking divergence on the momentum equation of \eqref{SNS}, we have the pressure equation
\begin{equation}\label{equation_pressure}
    -\Delta p = \div [\Bu \cdot \nabla \Bu].
\end{equation}
Then \eqref{hpeq} follows from adding \eqref{equation_u_and_Q} and \eqref{equation_pressure}.
Therefore, $Q$ satisfies the one-sided strong maximum principle. Hence it follows that $Q\equiv 0$ or $Q<0$ in $\mathbb{R}^3$. When $Q\equiv 0$, one can get $\boldsymbol{\omega} \equiv 0$ from \eqref{hpeq}, which implies 
$$-\Delta \Bu = \curl \Bo - \nabla (\div \Bu) =0 \quad \mbox{in}~~\mathbb{R}^3$$
because of the divergence free condition $\div \Bu=0$. Then the conditions \eqref{FE}, \eqref{BC} imply that $\Bu\equiv0$. Therefore, we only need to consider the case $Q<0$ in $\mathbb{R}^3$. We assume that $\Bu \not\equiv 0$ and prove by contradiction.

Define a cut-off function $\eta$ by
    \begin{align*}
        \eta(s)
        =
        \begin{cases}
            0 & \text{if }s<1/2,
            \\
            1 & \text{if }s\geq 1,
        \end{cases}
        \quad \text{and} \quad  \eta'(s)\geq 0 \quad \text{for all }s \in (0, \infty).
    \end{align*}
Then
\begin{align*}
\supp ( \eta'(-Q/\lambda) )
\subset 
\{x\in \mathbb{R}^3: -\lambda < Q(x) < -\lambda/2\}.
\end{align*}

We apply Sard's theorem to deduce that for any $\lambda \in (0, 2 \|Q\|_{\infty})$ except for possibly a set of measure zero, the level set $\{x\in \mathbb{R}^3 : Q(x)=-\lambda/2\}$ is a finite union of disjoint smooth closed surfaces
which is bounded due to \eqref{Qinfty}. We call such $\lambda$ a regular value.
Using these facts, one can apply the divergence theorem to obtain that for any regular value $\lambda$, 
    \begin{align} \label{estimate_nonlinear}
    \begin{aligned}
        \int_{\{Q<-\lambda/2\}}
        (\Bu \cdot \nabla )Q
        \thinspace
        \eta 
        \left( 
        -\frac{Q}{\lambda}
        \right) \thinspace dx
        &=
        \int_{ \{Q<-\lambda/2\} }
        \Bu \cdot \nabla 
        \left( 
            \int_0^{Q(x)}
            \eta \left( 
            -\frac{s}{\lambda}
            \right)
            ds
        \right)
        dx
        \\
        &=
        \int_{\{Q=-\lambda/2\}}
        \Bu \cdot \Bn 
        \int_0 ^{Q(x)}
        \eta 
        \left(
        -\frac{s}{\lambda}
        \right)
        ds
        \thinspace
        d\sigma
        \\
        &= \int_0^{-\frac{\lambda }{2}}
        \eta 
        \left(
        -\frac{s}{\lambda}
        \right)
        ds
        \int_{\{Q=-\lambda/2\}}
        \Bu \cdot \Bn \thinspace d\sigma=0
    \end{aligned}
    \end{align}
where we have use the divergence free condition $\div \Bu=0$ at the last equality. Here the set $\{Q<-\lambda/2\}$ indeed refers to $\{x\in \mathbb{R}^3: Q(x) <-\lambda/2\}$. Similar notations will be used hereafter.

Multiply \eqref{equation_u_and_Q} with $\eta(-Q/\lambda)$ and integrate by parts along with \eqref{estimate_nonlinear}
to obtain that for every regular value $\lambda>0,$
    \begin{align} \label{main_estimate}
    \begin{aligned}
        &2\int |\nabla \Bu|^2 \eta
        \left(
        -\frac{Q}{\lambda}
        \right) \thinspace dx \\
        &= 
        -\int 
        \nabla (|\Bu|^2)
        \cdot
        \nabla \left( 
        \eta \left( 
        -\frac{Q}{\lambda }
        \right)
        \right)
        dx
        \\
        &=
        -\int_{ 
        \{-\lambda < Q< -\lambda/2\}
        }
        u_i \partial_{x_j} u_i 
        \eta'
        \left(
        -\frac{Q}{\lambda }
        \right)
        \left(
        -\frac{\partial_{x_j}Q}{\lambda}
        \right) \thinspace dx
        \\
        &\leq
        \int_{\{-\lambda < Q< -\lambda/2\}}
        |\Bu||\nabla \Bu |
        \left|
        \eta' 
        \left( 
        -\frac{Q}{\lambda }
        \right)
        \right|
        \frac{|\nabla Q|}{\lambda}
        \thinspace dx
        \\
        &\leq
        \left( 
        \sup 
        _{\supp \thinspace \eta'(-Q/\lambda)}
        |\Bu|
        \right)
         \left( 
        \int |\nabla \Bu|^2
        \eta' 
        \left( 
        -\frac{Q}{\lambda }
        \right)
        \left(
        \frac{1}{\lambda}
        \right)
        dx
        \right)^{1/2}
        \left( 
        \int
        \frac{|\nabla Q|^2 }{\lambda}
         \eta' 
        \left( 
        -\frac{Q}{\lambda }
        \right)
        dx
        \right)^{1/2}.
    \end{aligned}
    \end{align}

Now we will use the head pressure equation \eqref{hpeq}.
Multiply \eqref{hpeq} by $\eta(-Q/\lambda)$ and integrate by parts along with \eqref{estimate_nonlinear} to obtain 
    \begin{align*}
        \int 
        _{ \{-\lambda < Q< -\lambda/2\}}
        \frac{|\nabla Q|^2}{\lambda }
        \eta'
        \left(
        -\frac{Q}{\lambda }
        \right)
        dx
        =
        \int
        |\Bo|^2 
        \thinspace 
        \eta\left( 
        -\frac{Q}{\lambda}
        \right)
        dx.
    \end{align*}
Applying this estimate in \eqref{main_estimate}, one can obtain
    \begin{align*}
        &2
        \int |\nabla \Bu|^2 
        \eta
        \left(
        -\frac{Q}{\lambda }
        \right)
        dx \\
        &\leq 
         \left( 
        \sup 
        _{\supp \thinspace \eta'(-Q/\lambda)}
        |\Bu|
        \right)
         \left( 
        \int |\nabla \Bu|^2
        \eta' 
        \left( 
        -\frac{Q}{\lambda }
        \right)
        \left(
        \frac{1}{\lambda}
        \right)
        dx
        \right)^{1/2}
        \left( 
        \int
       |\Bo|^2 
       \eta\left(
       -\frac{Q}{\lambda }
       \right)
        dx
        \right)^{1/2}
        \\
        &\leq
          \left( 
        \sup 
        _{\supp \thinspace \eta'(-Q/\lambda)}
        |\Bu|
        \right)
         \left( 
        \int |\nabla \Bu|^2
        \eta' 
        \left( 
        -\frac{Q}{\lambda }
        \right)
        \left(
        \frac{1}{\lambda}
        \right)
        dx
        \right)^{1/2}
        \left( 
        \int
       |\nabla \Bu|^2 
       \eta\left(
       -\frac{Q}{\lambda }
       \right)
        dx
        \right)^{1/2}
    \end{align*}
Hence
    \begin{align}
     \label{main_estimate2}
        \int |\nabla \Bu|^2 
       \eta
       \left(
       -\frac{Q}{\lambda }
       \right)
       dx
       \leq
       \frac{1}{4}
        \left( 
        \sup 
        _{\supp \thinspace \eta'(-Q/\lambda)}
        |\Bu|^2
        \right)
        \int |\nabla \Bu|^2
        \eta' 
        \left( 
        -\frac{Q}{\lambda }
        \right)
        \left(
        \frac{1}{\lambda}
        \right)
        dx
    \end{align}

Next we define the following function
    \begin{align*}
    g(\lambda)
    :=\int |\nabla \Bu|^2 \eta
    \left(
    -\frac{Q}{\lambda }
    \right)
    dx,
    \quad
    \lambda > 0.
    \end{align*}
%Note that for any regular values $ \lambda_2 > \lambda_1 > 0$, we have $\{Q < -\lambda_2/2\} \subsetneq \{Q < -\lambda_1/2\}$. Therefore, 
As $\eta$ is a non-decreasing function, it follows that $g$ is a non-increasing function, and 
\begin{equation}\label{g_limit}
    \lim_{\lambda \to 0_+} g(\lambda) = \|\nabla \Bu \|_{2}
\end{equation}
due to Dominated Convergence Theorem.
Then 
    \begin{align*}
       -g'(\lambda)
        &=
        \int 
        _{
        \{-\lambda < Q< -\lambda/2\}
        }
        |\nabla \Bu|^2 
        \eta' 
        \left( 
        -\frac{Q}{\lambda }
        \right)
        \left( 
        \frac{-Q}{\lambda^2}
        \right)
        dx 
        \\
        &\geq
        \frac{1}{2}
          \int 
        _{
        \{-\lambda < Q< -\lambda/2\}
        }
        |\nabla \Bu|^2 
        \eta' 
        \left( 
        -\frac{Q}{\lambda }
        \right)
        \left( 
        \frac{1}{\lambda}
        \right)
        dx,
    \end{align*}
where we used the fact that $\eta' \ge 0$. Applying this estimate in \eqref{main_estimate2}, one can obtain
    \begin{align*}
        g(\lambda) 
        \leq
        -\frac{1}{2} g'(\lambda) 
        \left( 
        \sup 
        _{\supp \thinspace \eta'(-Q/\lambda)}
        |\Bu|^2
        \right).
    \end{align*}
This holds for every regular value $\lambda>0$. By \eqref{cond2}, there exists an $s_0 \in (0,1/2)$ such that
\begin{equation}\label{lambda_inequality}
    g(\lambda) \le -C g'(\lambda) \lambda \log \lambda.
\end{equation}
for all regular values $\lambda \in (0, s_0)$. Since we have assume $\Bu \not\equiv 0$, there exists an $s_1 \in (0, s_0)$, such that $g(\lambda) > 0$ for all regular values $\lambda \in (0, s_1)$. Dividing \eqref{lambda_inequality} by $g(\lambda)\lambda \log \lambda$ on both sides and integrating over $(\lambda_1, \lambda_2)$ for regular values $\lambda_1, \lambda_2 \in (0,s_1)$, we have
$$
\int_{\lambda_1}^{\lambda_2} \frac{1}{ \lambda \log \lambda} \, d\lambda \le - C \int_{\lambda_1}^{\lambda_2} \frac{g'(\lambda)}{g(\lambda)} \, d\lambda = C \log \left( \frac{g(\lambda_1)}{g(\lambda_2)} \right).
$$
Taking $\lambda_1 \to 0$, one can see the left-hand-side goes to infinity while the right-hand-side stays bounded due to \eqref{g_limit}. This leads to a contradiction. Therefore, $\Bu \equiv 0$.
\end{proof}

\bibliographystyle{amsplain}
\bibliography{mybib.bib}
\end{document}